\newtheorem{thm}{Theorem}[section]
\newtheorem{lemma}{Lemma}[section]
\newtheorem{rem}{Remark}[section]
\newtheorem{prop}{Proposition}[section]
\def \R{\mathbb{R}}
\numberwithin{equation}{section}
\begin{document}

\title[Global regular solutions for the 3D  ZK equation]
{
Global regular solutions for the 3D Zakharov-Kuznetsov equation posed on a bounded domain
}
\author[
 N.~A. Larkin]
{
 N.~A. Larkin}

\bigskip
%\\
\address
{
Departamento de Matem\'atica\\
Universidade Estadual de Maring\'a\\
87020-900, Maring\'a - PR, Brazil.
}
\email{ \ \  nlarkine@uem.br}
\date{}

\subjclass
{2010 SMC 35Q53, 35B35}
\keywords
{ZK equation, stabilization}

\begin{abstract}
An initial-boundary value problem for the 3D Zakharov-Kuznetsov equation posed on bounded domains is considered.
Existence and uniqueness of a global regular solution as well as  exponential decay   of the $H^2$-norm for small initial data are proven.
\end{abstract}

\maketitle

\section{Introduction}\label{introduction}

We are concerned with the existence, uniqueness and exponential decay of the $H^2$-norm for global regular solutions   to an initial-boundary value problem (IBVP) for the 3D Zakharov-Kuznetsov (ZK) equation 
 \begin{equation}
u_t+(c_s +u)u_x +u_{xxx}+u_{xyy}+u_{xzz}=0 \label{zk}
\end{equation}
which describes the propagation of nonlinear ionic-sonic waves in a plasma submitted to a magnetic field directed along the $x$ axis and  $c_s$ is a positive constant corresponding to the sound velocity \cite{temam,temam2,zk} . This equation is a three-dimensional
analog of the well-known Korteweg-de Vries (KdV) equation
\begin{equation}\label{kdv}
u_t+uu_x+u_{xxx}=0.
\end{equation}

Equations \eqref{zk} and \eqref{kdv} are typical examples
%representatives
of so-called
%a class of
dispersive equations which attract considerable attention
of both pure and applied mathematicians in the past decades. The KdV
equation is probably most studied in this context.
The theory of the initial-value problem
(IVP henceforth)
for \eqref{kdv} is considerably advanced today
\cite{bona2,bourgain2,kato,ponce2,saut2,temam1}.

Recently, due to physics and numerics needs, publications on initial-boundary value
problems both  in bounded and unbounded domains for dispersive equations have appeared
\cite{bona1,bona3,bubnov,colin,larkin,lar2,rivas,wang,zhang}. In
particular, it has been discovered that the KdV equation posed on a
bounded interval possesses an implicit internal dissipation. This allowed
to prove the exponential decay rate of small solutions for
\eqref{kdv} posed on bounded intervals without adding any
artificial damping term \cite{larkin}. Similar results were proved
for a wide class of dispersive equations of any odd order with one
space variable \cite{familark}.

However, \eqref{kdv} is a satisfactory approximation for real waves phenomena while the
equation is posed on the whole line ($x\in\mathbb{R}$); if
cutting-off domains are taken into account, \eqref{kdv} is no longer
expected to mirror an accurate rendition of reality. The correct
equation in this case (see, for instance, \cite{bona1,zhang})
should be written as
\begin{equation}\label{1.3}
u_t+ u_x+uu_x+u_{xxx}=0.
\end{equation}
Indeed, if $x\in\R,\ t>0$,
the linear traveling term $u_x$ in \eqref{1.3} can be easily scaled
out by a simple change of variables, but it can
not be safely ignored for problems posed  both on finite and semi-infinite intervals without
changes in the original domain.

Once bounded domains are considered
as a spatial region of waves propagation, their sizes appear to be
restricted by certain critical conditions.
 We recall, however, that
if the transport term $u_x$ is neglected, then \eqref{1.3} becomes \eqref{kdv}, and it is possible to prove the
exponential decay rate of small solutions for \eqref{kdv} posed
on any bounded interval.
More  results on control and stabilizability for the KdV equation can
be found in \cite{rosier1,rozan}.

Later, the interest on dispersive equations became to be extended for the
multi-dimensional models such as Kadomtsev-Petviashvili (KP)
and ZK equations.
As far as the ZK equation is concerned,
 results  both on IVP and IBVP can be found in
%readers who are interested in both IVP and IBVP are referred to
\cite{faminski,faminski2,farah,pastor,pastor2,saut,ribaud,temam}.
The biggest part of these publications is devoted to study of well-posedness of the Cauchy problem and initial-boundary value problems for the 2D ZK equation
\cite{faminski, faminski2, farah,pastor, pastor2}. In the case of the 3D ZK equation, there are results on local well- posedness for the Cauchy problem \cite{saut,ribaud}; the existence of local strong solutions to an initial- boundary value problem posed on a bounded domain, \cite{wang}, as well as the existence of global weak solutions \cite{temam}.

 Our work has
been inspired by \cite{temam,wang} where \eqref{zk} posed  on a bounded domain was considered. A thorough analysis of these papers has revealed that an implicit dissipativity of the terms $u_{xyy}+u_{xzz}$ may help to establish a global well-posedness of initial-boundary value problems in classes of regular solutions. Yearlier this dissipativity has been used in order to prove exponential decay for the 2D ZK equation \cite{larh1,larkintronco}.

The main goal of our work is to prove the existence and uniqueness
of global-in-time regular solutions of \eqref{zk} posed  on
bounded domains and the exponential decay rate of
these solutions for sufficiently small initial data. To cope with this problem, we exploited the strategy completely different from the standard schemes: first to prove the existence result and after that to study uniqueness and decay properties of solutions. In our case, we prove simultaneously existence of global regular solutions and their exponential decay.

The paper is outlined as follows. Section I is Introduction. Section 2 contains  formulation
of the problem and auxiliaries. In Section \ref{existence}, we prove the existence and uniqueness of global regular solutions and, simultaneously, exponential decay of the $H^2$-norm  establishing  global estimates of local strong solutions provided by \cite{wang}.

\section{Problem and preliminaries}\label{problem}

Let $L,B_y,B_z$ be finite positive numbers. Define
%$\mathcal{D}$ and $\mathcal{Q}_T$ to be spatial and time-spatial domains
\begin{align*}
&\mathcal{D}=\{(x,y,z)\in\mathbb{R}^3: \ x\in(0,L),\ y\in(0,B_y), \ z\in(0,B_z)\};\\
&\mathcal{S}=\{(y,z)\in \mathbb{R}^2: \ y\in (0,B_y),\ z\in(0,B_z)\},\ \ \ \mathcal{Q}_t=\mathcal{D}\times (0,t).
\end{align*}

Consider the following IBVP:
\begin{align}
&Au\equiv u_t+(c_s+u)u_x+\Delta u_x=0 \quad \mbox{in}\quad \mathcal{Q}_t;  \label{2.1}\\
&u|_{\gamma}=0,\ t>0; \label{2.2}\\
&u_x(L,y,z,t)=0,\; y\in(0,B_y),\;z\in(0,B_z),\; t>0;
\label{2.3}
\\
&u(x,y,z,0)=u_0(x,y,z),\ \ (x,y,z)\in\mathcal{D},
\label{2.4}
\end{align}
where $\gamma$ denotes the boundary of $\mathcal{D}$,\quad$u_0:\mathcal{D}\to\mathbb{R}$ is a given function.

Hereafter subscripts $u_x,\ u_{xy},$ etc. denote the partial derivatives,
as well as $\partial_x$ or $\partial_{xy}^2$ when it is convenient.
Operators $\nabla$ and $\Delta$ are the gradient and Laplacian acting over $\mathcal{D}.$
By $(\cdot,\cdot)$ and $\|\cdot\|$ we denote the inner product and the norm in $L^2(\mathcal{D}),$
and $\|\cdot\|_{H^k}$ stands for the norm in $L^2$-based Sobolev spaces.

We will need the following result \cite{lady}.
\begin{lemma}\label{lemma1}
Let $u\in H^1(\mathcal{D})$ and $\gamma$ be the boundary of $\mathcal{D}.$

If $u|_{\gamma}=0,$ then
\begin{equation}\label{lady1}
\|u\|_{L^q(\mathcal{D})}\le 4^{\theta}\|\nabla u\|^{\theta}\|u\|^{1-\theta},
\end{equation}
where  $\theta=3\left(\frac12-\frac1q\right).$

If $u|_{\gamma}\ne0,$ then
\begin{equation}\label{lady2}
\|u\|_{L^q(\mathcal{D})}\le4^{\theta} C_{\mathcal{D}}\|u\|^{\theta}_{H^1(\mathcal{D})}\|u\|^{1-\theta},
\end{equation}
where $C_{\mathcal{D}}$ does not depend on a size of $\mathcal{D}.$
\end{lemma}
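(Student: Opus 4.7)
The statement is the three-dimensional Gagliardo--Nirenberg--Sobolev interpolation inequality; the plan is to reduce both cases to the Sobolev embedding $H^1(\mathbb{R}^3)\hookrightarrow L^6(\mathbb{R}^3)$ combined with H\"older interpolation in the $L^p$-scale. First note that $\theta=3(1/2-1/q)\in[0,1]$ precisely when $q\in[2,6]$, which is the natural range: $6$ is the Sobolev exponent in three dimensions, so one interpolates between $L^2$ and $L^6$.

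For the zero-trace statement I would extend $u$ by zero to $\tilde u\in H^1(\mathbb{R}^3)$; this extension preserves both $\|u\|$ and $\|\nabla u\|$. The Sobolev inequality on $\mathbb{R}^3$ gives $\|\tilde u\|_{L^6(\mathbb{R}^3)}\le S\|\nabla\tilde u\|$ with $S\le 4$, which is the origin of the constant $4^\theta$. Interpolation between $L^2$ and $L^6$ via H\"older's inequality, using the relation $1/q=(1-\theta)/2+\theta/6$, yields
\[
\|\tilde u\|_{L^q(\mathbb{R}^3)} \le \|\tilde u\|^{1-\theta}\,\|\tilde u\|_{L^6(\mathbb{R}^3)}^{\theta} \le 4^{\theta}\,\|\nabla u\|^{\theta}\,\|u\|^{1-\theta},
\]
and restricting to $\mathcal{D}$ gives \eqref{lady1}.

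For the general case I would construct a bounded extension operator $E:H^1(\mathcal{D})\to H^1(\mathbb{R}^3)$. Since $\mathcal{D}$ is a rectangular cuboid, this can be done by iterating reflections across each of the six faces and multiplying by a fixed smooth cutoff supported in a neighbourhood of $\mathcal{D}$. Reflections are isometric in the tangential variables and at most double $\|\cdot\|_{H^1}$ per face, while the cutoff can be chosen so that its derivatives are bounded by a universal constant independent of $L$, $B_y$, $B_z$. This yields $\|Eu\|_{H^1(\mathbb{R}^3)}\le C_{\mathcal{D}}\|u\|_{H^1(\mathcal{D})}$ with $C_{\mathcal{D}}$ independent of the dimensions of $\mathcal{D}$. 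Applying the previous Sobolev-plus-H\"older argument to $Eu$ in place of $\tilde u$ and restricting to $\mathcal{D}$ proves \eqref{lady2}.

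The main delicate point is the claim that $C_{\mathcal{D}}$ does not depend on the size of $\mathcal{D}$: one must arrange the cutoff and the thickness of the reflection collar so that the corresponding gradients scale with a fixed universal length rather than with the side lengths of the cuboid. This is essentially the only nontrivial step and is precisely what the classical Lady\v{z}enskaya construction accomplishes.
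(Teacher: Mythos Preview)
The paper does not actually prove this lemma; it is quoted verbatim from Lady\v{z}enskaya's monograph \cite{lady} and used as a black box throughout the estimates. Your plan---extend by zero (respectively, by reflection plus cutoff) to $\mathbb{R}^3$, invoke the Sobolev embedding $\dot H^1(\mathbb{R}^3)\hookrightarrow L^6(\mathbb{R}^3)$, and interpolate with $L^2$ via H\"older using $1/q=(1-\theta)/2+\theta/6$---is exactly the standard derivation one finds in that reference, so there is nothing to contrast. Your identification of the only genuinely delicate step (making the extension constant independent of $L,B_y,B_z$) is accurate; for a rectangular cuboid the even-reflection extension multiplies each norm by a fixed numerical factor regardless of side lengths, which is what keeps $C_{\mathcal D}$ size-independent.
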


\section{Existence theorem}\label{existence}

In this section we state the existence result for  bounded domains.
\begin{thm}\label{theorem1}
Let  $u_0$ be a given function such that $u_0|_{\gamma}=u_{0x}|_{x=L}=0$ and

$$\|u_0\|^2+\|u_{0yy}\|^2+\|u_{0zz}\|^2+J_0<\infty,$$
where

$$ J_0=\left((1+x),u^2_0+[(c_s+u_0)u_{0x}+\Delta  u_{0x}]^2\right).$$
Moreover,  the following conditions to be fulfilled:
\begin{align}
&K_2=\pi^2\big[\frac{7}{8B_y^2}+\frac{7}{8B_z^2}+\frac{23}{8L^2}\big]\geq 4c_s;\notag\\
& \|u_0\|^4\leq\frac{K_2}{4K_3};\quad
 J^2_0\leq \frac{K_2}{4K_4}, \label {assump}
\end{align}
where
$$K_3=3^3 2^{16}(1+L)^4(2C_1^2+1),\: C_1=1+c_s+\frac{2^{11}}{3}\|u_0\|^4,\:K_4=\frac{3^3 2^{19}}{25}(1+L)^6.$$
Then there exists a unique regular solution to
\eqref{2.1}-\eqref{2.4} such that
\begin{align*}
&u\in L^{\infty}(0,\infty;H^2(\mathcal{D}))\cap L^2(0,\infty;H^3(\mathcal{D}));\\
&\Delta u_x\in L^{\infty}(0,\infty;L^2(\mathcal{D}))\cap L^2(0,\infty;H^1(\mathcal{D}));\\
&u_t\in L^{\infty}(0,\infty;L^2(\mathcal{D}))\cap L^2(0,\infty;H^1(\mathcal{D}))
\end{align*}
and
\begin{equation}
\|u\|^2_{H^2(\mathcal{D})}(t)\leq Ce^{-\chi t}J_0.
\end{equation}
where the constant $C$ depends on $L, J_0;$\; $\chi=\frac{K_2}{4(1+L)}.$
\end{thm}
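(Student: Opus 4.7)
The plan is to start from the local strong solutions of \cite{wang} and derive uniform-in-time a priori bounds that, via a standard continuation argument, yield global existence, regularity and exponential decay simultaneously. In line with the strategy announced in the introduction, the global $H^2$ estimate is closed precisely by the smallness conditions \eqref{assump}, so the local solution never leaves the regime in which the estimate is valid, and the same estimate already delivers the exponential rate.

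The main device is the multiplier $(1+x)$, which converts the dispersive terms $u_{xxx}+u_{xyy}+u_{xzz}$ into genuine dissipation. First I would multiply \eqref{2.1} by $(1+x)u$ and integrate over $\mathcal{D}$. The boundary conditions \eqref{2.2}--\eqref{2.3} kill all surface contributions from the dispersion except the nonnegative term $\tfrac12\|u_x(0,\cdot,\cdot)\|^2_{L^2(\mathcal{S})}$ and leave the positive bulk form $\tfrac32\|u_x\|^2+\tfrac12\|u_y\|^2+\tfrac12\|u_z\|^2$. The convective piece contributes $-\tfrac{c_s}{2}\|u\|^2-\tfrac13\int u^3$; both are absorbed by the gradient via Poincar\'e together with Lemma \ref{lemma1} (taken with $q=3$), under the smallness assumption on $\|u_0\|$ in \eqref{assump}. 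The condition $K_2\ge 4c_s$ is exactly what guarantees that the remaining coefficient is positive, so one obtains a differential inequality of the form $\frac{d}{dt}\bigl((1+x),u^2\bigr)+\chi\bigl((1+x),u^2\bigr)\le 0$ with $\chi=K_2/[4(1+L)]$, hence exponential decay of $\|u\|^2$.

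Next I would differentiate \eqref{2.1} in $t$ to obtain an equation for $v=u_t$, namely $v_t+(c_s+u)v_x+u_x v+\Delta v_x=0$ with the same homogeneous boundary conditions as $u$. Multiplying by $(1+x)v$ and integrating reproduces the favorable structure of the first step, plus a perturbation of the form $\tfrac12\int(1+x)u_x v^2-\tfrac12\int u v^2$. These are estimated by Lemma \ref{lemma1} and the smallness of $u$ obtained in the first step; note that the initial value of $((1+x),v^2)$ equals the second summand of $J_0$, because the equation itself gives $v(0)=-(c_s+u_0)u_{0x}-\Delta u_{0x}$, which is why the second inequality in \eqref{assump} must appear. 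The outcome is exponential decay of $((1+x),u_t^2)$. Using the equation once more, $\|\Delta u_x\|$ is then recovered from $\|u_t\|$ and $\|(c_s+u)u_x\|$, providing the $L^\infty_t L^2$ and $L^2_t H^1$ control of $\Delta u_x$, after which the full $H^2$-bound (and the stronger spaces listed in the theorem) is extracted from the control of $\|u_x\|$, $\|u_y\|$, $\|u_z\|$ and $\|\Delta u_x\|$ together with the boundary conditions, integrated plane-by-plane in $x$. Uniqueness follows from a routine energy estimate for the difference of two solutions.

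The main obstacle is the delicate constant-chasing behind \eqref{assump}. Each nonlinear absorption (of $u^3$ in the first step, and of $u v^2$ together with $(1+x)u_x v^2$ in the second) produces a factor that has to stay below the dissipation coefficient, and since all such factors are ultimately governed by $\|u\|^4$, one must iterate the bounds and verify via a continuity-in-time argument that they never exceed the explicit thresholds encoded by $K_2$, $K_3$ and $K_4$. This is the technical heart of the proof and determines the exact form of the smallness conditions in the statement.
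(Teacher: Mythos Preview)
Your skeleton coincides with the paper's strategy, but three steps are underestimated and, as written, do not close.

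\medskip
\textbf{(1) Controlling $\|u_x\|$ in the $u_t$ estimate.} You write that the perturbation $\tfrac12\int (1+x)u_x v^2$ is absorbed ``by the smallness of $u$ obtained in the first step''. But the first step only gives decay of $\|u\|$, not of $\|u_x\|$. The paper inserts an intermediate estimate (its Estimate~III): from the \emph{same} identity $2((1+x)Au,u)=0$, keeping the $u_t$ term on the right instead of dropping it, one gets
\[
\|u_x\|^2 \le C_1\|u\|^2 + \tfrac{2}{5}(1+L)\bigl((1+x),u_t^2\bigr).
\]
Only after substituting this into the $u_t$ inequality does the nonlinear term split into a $K_3\|u_0\|^4\|u_t\|^2$ piece \emph{and} a $K_4\bigl((1+x),u_t^2\bigr)^2\|u_t\|^2$ piece. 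It is this self–interacting quadratic term in $((1+x),u_t^2)$ that the continuity argument must defeat, and that is the true origin of the condition $J_0^2\le K_2/(4K_4)$—not merely that $J_0$ is the initial value.

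\medskip
\textbf{(2) From $\|\Delta u_x\|$ to $H^2$.} The phrase ``integrated plane-by-plane in $x$'' does not work: $\Delta u_x\in L^2$ together with $u\in H^1$ does not yield all second derivatives of $u$, because the Dirichlet datum $u_x(0,\cdot,\cdot)$ is only in $L^2(\mathcal S)$ at this stage. The paper therefore first runs a separate multiplier $-2\bigl((1+x)Au,u_{yy}+u_{zz}\bigr)=0$ (Estimate~V), which produces $\|u_{yy}\|,\|u_{zz}\|,\|u_{yz}\|,\|u_{xy}\|,\|u_{xz}\|$ \emph{and} the boundary traces $u_{xy}(0,\cdot),u_{xz}(0,\cdot)$. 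Only then does it treat $\Delta u_x=-u_t-(c_s+u)u_x$ as a Dirichlet problem, subtract the now–controlled inhomogeneous trace, and extract $\|u_{xx}\|$ (Estimate~VI).

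\medskip
\textbf{(3) The $L^2(0,\infty;H^3)$ regularity.} This is not a consequence of the previous steps. The paper needs a further estimate (Estimate~VII), testing against $(1+x)(\partial_y^4 u+\partial_z^4 u+\partial_y^2\partial_z^2 u)$, to get third-order tangential derivatives and the traces $u_{xyy}(0,\cdot),u_{xzz}(0,\cdot)$ in $L^2_t$; the remaining third derivatives then come from $H^2$ elliptic regularity for $u_x$. This is where the hypothesis $\|u_{0yy}\|^2+\|u_{0zz}\|^2<\infty$ is used.
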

\begin{proof}

To prove this theorem,  we use  local in $t$ existence of strong solutions to \eqref{2.1}-\eqref{2.4} established in \cite{wang} and  prove global a priori estimates of strong solutions. Of course, it is possible to use a parabolic regularization as in \cite{larkintronco,wang} and to prove directly global estimates of regular solutions for a parabolic problem.

\subsection{Estimate I}\label{1-st estimate}
Multiply \eqref{2.1} by $u$ and integrate over $\mathcal{D}$ and $(0,t)$ to obtain
%$$\frac{d}{dt}\|u\|^2(t)+2\eps\left(\|\partial^2_xu\|^2(t)+\|\partial^2_yu\|^2(t)\right)+\int_{-B}^Bu_x^2(0,y,t)\,dy=0.$$ This gives
\begin{align}\label{E1}
\|u\|^2(t)+\int_0^t\int_{\mathcal{S}} u_{x}^2(0,y,z,\tau)\,dy\,dz d\tau=\|u_0\|^2,\ \ t\in(0,T).
\end{align}
 The following inequalities are crucial for our proof.

\begin{prop} \label{steklov} Let $v \in H^1_0(D).$ Then
\begin{align}
&\|v_y\|^2(t)\geq \frac {\pi^2}{B_y^2}\|v\|^2(t),\quad \|v_z\|^2(t)\geq \frac {\pi^2}{B_z^2}\|v\|^2(t),\nonumber\\ &\|v_x\|^2(t)\geq \frac {\pi^2}{L^2}\|v\|^2(t). \label{Estek}  
\end{align}
\end{prop}
\begin{proof} The proof is based on the Steklov inequality: let $v(t)\in H^1_0(0,\pi)$,  then $\int_0^{\pi}v_t^2(t)\,dt\geq\int_0^{\pi}v^2(t)\,dt.$
 Inequalities \eqref{Estek} follow from here by a simple scaling.
 \end{proof}

\subsection{Estimate II}\label{2-nd estimate}
Write the inner product
$$2\left(A u_,(1+x)u\right)(t)=0
$$
as
\begin{align}
&\frac{d}{dt}\left((1+x),u^2\right)(t)
+\int_{\mathcal{S}}u_x^2(0,y,z,t)\,dydz-c_s\|u\|^2(t)\nonumber\\
&+3\|u_x\|^2(t)+\|u_y\|^2(t)+\|u_z\|^2(t)
=\frac23(1,u^3)(t).\label{e2}
\end{align}
Making use of \eqref{lady1}, we compute
\begin{align}
I=&\frac{2}{3}(1,u^3)(t)
\le \frac{2}{3}\|u\|^3_{L^3(\mathcal{D})}(t)\le\frac{2^4}{3}\left[\|\nabla u\|^{1/2}(t)\|u\|^{1/2}(t)\right]^3\nonumber\\
&\le\frac{1}{8}\|\nabla u\|^2(t)+\frac{2^{17}}{3}\|u\|^6(t). 
\end{align}

Substituting $I$ into \eqref{e2}, we obtain

\begin{align*}
&\frac{d}{dt}\left((1+x),u^2\right)(t)
+\int_{\mathcal{S}}u_x^2(0,y,z,t)\,dydz\\
&+\frac{23}{8}\|u_x\|^2(t)+\frac78\|u_y\|^2(t)+\frac78\|u_z\|^2(t)\\
&-K_1\|u\|^6(t)-c_s\|u\|^2(t)\leq0,
\end{align*}
where $K_1=\frac{2^{17}}{3}.$

Using \eqref{Estek}, we get
\begin{align}
&\frac{d}{dt}\left((1+x),u^2\right)(t)+\frac{K_2}{2}\|u\|^2(t)
+\int_{\mathcal{S}}u_x^2(0,y,z,t)\,dydz\nonumber\\
&\big[\frac{K_2}{2}-c_s-K_1\|u\|^4(t)\big]\|u\|^2(t),
\end{align}
where
$$
K_1=\frac{2^{17}}{3}.$$

By conditions of Theorem \ref{theorem1}, the last inequality can be rewritten as

\begin{align*}
\frac{d}{dt}\left((1+x),u^2\right)(t)+2\chi((1+x),u^2)(t)\leq0,
\end{align*}
with $\chi=\frac{K_2}{4(1+L)}.$
Solving this inequality, we find
\begin{equation}
\|u\|^2(t)\leq((1+x),u^2)(t)\leq e^{-2\chi t}((1+x),u_0^2)\quad\forall t>0. \label{E2}
\end{equation}

\subsection{Estimate III}\label{3-d estimate}

Rewrite the scalar product

$$2\left(A u_,(1+x)u\right)(t)=0
$$
as

\begin{align*}
&\int_{\mathcal{S}}u_x^2(0,y,z,t)\,dydz
+3\|u_x\|^2(t)+\|u_y\|^2(t)+\|u_z\|^2(t)\nonumber\\
&=\frac23(1,u^3)(t)+c_s\|u\|^2(t)-2((1+x)u,u_t)(t).
\end{align*}

Using \eqref{lady1}, we find
$$I=\frac23(1,u^3)(t)\leq \frac12\|\nabla u\|^2(t)+\frac{2^{11}}{3}\|u\|^6¨(t).$$
Substituting $I$ into the last equation, we get
\begin{align}
&\int_{\mathcal{S}}u_x^2(0,y,z,t)\,dydz
+\frac52\|u_x\|^2(t)+\frac12\|u_y\|^2(t)+\frac12\|u_z\|^2(t)\nonumber\\
&\leq c_s\|u\|^2(t)+\frac{2^{11}}{3}\|u\|^6(t)-2((1+x)u,u_t)(t)\label{Enabla}
\end{align}
and
\begin{align}
\|u_x\|^2(t)&\leq \frac25\|u\|^2(t)\big[1+c_s+\frac{2^{11}}{3}\|u\|^4(t)\big]+\frac25(1+L)((1+x),u_t^2)(t)\nonumber\\&\le C_1\|u\|^2(t)+\frac25(1+L)((1+x),u_t^2)(t)\big], \label{Eux}
\end{align}
where
$$C_1=\frac25(1+c_s+\frac{2^{11}}{3}\|u_0\|^4).$$

\subsection{Estimate IV}\label{4-th estimate}

Write the inner prouct
$$((Au)_t,(1+x)u_t)(t)=0$$

as

\begin{align}
\frac{d}{dt}
&\left((1+x),u_t^2\right)(t)+\int_{\mathcal{S}}u_{xt}^2(0,y,z,t)\,dydz-c_s\|u_t\|^2(t)+3\|u_{xt}\|^2(t)\notag\\
&+\|u_{yt}\|^2(t)+\|u_{zt}\|^2(t)
+2\left((1+x)(uu_x)_t,u_t\right)(t)=0. \label {e4}
\end{align}

We calculate
\begin{align*}
&I=2((1+x)(uu_x)_t,u_t)(t)=2((1+x)(uu_t)_x,u_t)(t)\\&=((1+x)u_x-u,u_t^2)(t)\leq \|(1+x)u_x-u\|(t)\|u_t\|^2_{L^4(\mathcal{D})}(t)\\&
\leq (1+L)\big[\|u_x\|(t)+\|u\|(t)\big]4^{3/2}\|u_t\|^{1/2}(t)\|\nabla u_t\|^{3/2}(t)\\&
\leq\frac34\epsilon^{4/3}\|\nabla u_t\|^2(t)+\frac{1}{4\epsilon^4}4^6(1+L)^4\big[\|u_x\|(t)+\|u\|(t)\big]^4\|u_t\|^2(t).
\end{align*}

Taking $\frac34\epsilon^{4/3}=\frac18,$  we get
\begin{align*}
&I\leq \frac18\|\nabla u_t\|^2(t)+(1+L)^43^32^{16}\big[\|u_x\|^4(t)+\|u\|^4(t)\big]\|u_t\|^2(t).
\end{align*}

Substituting $I$ into \eqref{e4} and making use of \eqref{Eux}, we obtain

\begin{align*}
&\frac{d}{dt}\left((1+x),u_t^2\right)(t)+\int_{\mathcal{S}}u_{xt}^2(0,y,z,t)\,dydz+\frac14\big\{\frac{23}{8}\|u_{xt}\|^2(t)\\&+\frac78\|u_{yt}\|^2(t)+\frac78\|u_{zt}\|^2(t)\big\}+\frac34K_2-\big\{c_s+(1+L)^43^32^{16}(2C_1^2+1)\|u_0\|^4\\&+\frac{3^32^{19}}{25}(1+L)^6((1+x),u_t^2)^2(t)\big\}\|u_t\|^2(t)\leq 0
\end{align*}
which can be rewritten as
¨\begin{align}
&\frac{d}{dt}\left((1+x),u_t^2\right)(t)+\int_{\mathcal{S}}u_{xt}^2(0,y,z,t)\,dydz+\frac14\big\{\frac{23}{8}\|u_{xt}\|^2(t)\notag\\&
+\frac78\|u_{yt}\|^2(t)+\frac78\|u_{zt}\|^2(t)\big\}+\big[\frac{3K_2}{4}-c_s-K_3\|u_0\|^4\notag\\&-K_4((1+x),u_t^2)^2(t)\big]\|u_t\|^2(t)\leq 0,
\label{eut}
\end{align}
where
\begin{align}
K_3=3^32^{16}(1+L)^4(2C_1^2+1),\quad K_4=\frac{3^32^{19}(1+L)^6}{25}.\label{EK}
\end{align}
Due to conditions of Theorem \ref{theorem1},
$$K_3\|u_0\|^4< \frac{K_2}{4}, \quad K_4((1+x),u_t^2)^2(0)<\frac{K_2}{4},$$
hence, \cite{familark}, 
$$ K_4((1+x),u_t^2)^2(t)<\frac{K_2}{4}\quad \forall t> 0$$
and \eqref{eut} becomes
\begin{align*}
&\frac{d}{dt}\left((1+x),u_t^2\right)(t)+\frac14\big\{\frac{23}{8}\|u_{xt}\|^2(t)+\frac78\|u_{yt}\|^2(t)\notag\\&+\frac78\|u_{zt}\|^2(t)\big\}\leq0.
\end{align*}
Making use of \eqref{Estek}, we get
 
$$\frac{d}{dt}\left((1+x),u_t^2\right)(t)+\frac{K_2}{4(1+L)}((1+x),u_t^2)(t)\leq 0.$$
Since
$$((1+x),u_t^2)(0)=((1+x),\big[(c_s +u_0)u_{0x}+\Delta u_{0x}\big]^2)\leq J_0,$$
solving this inequality, we find
\begin{align}
\|u_t\|^2\leq ((1+x),u_t^2)(t)\leq e^{-\chi t}((1+x),u_t^2)(0)\leq e^{-\chi t}J_0\label{EUt}
\end{align}

with $\chi=\frac{K_2}{4(1+L)}.$

Returning to \eqref{eut} and taking into account \eqref{E2}, \eqref{Enabla}, we obtain

\begin{align}
&((1+x),u_t^2)(t)+\int_{\mathcal{S}}u_x^2(0,y,z,t)\,dydz
+\| u\|_{H^1_0(\mathcal{D})}^2(t)\notag\\
&+\int_{\mathcal{S}}u^2_x(0,y,z,t)\,dydz\leq C_2e^{-\chi t}J_0;\label{EH1}\\
 &\int_0^t\big\{\int_{\mathcal{S}}u^2_x(0,y,z,\tau)\,dydz+\|\nabla u_{\tau}\|^2(\tau)\big\}\,d\tau\notag\\&\leq C_3J_0\quad\forall \,t>0,\label{EUT}
\end{align}
where the constants $C_2,C_3$ do not depend on $t>0$.

\subsection{Estimate V}\label{5-d estimate}

Transform the scalar product 

$$-2((1+x)Au,u_{yy}+u_{zz})(t)=0$$
 into the following equality:
 \begin{align}
 &-c_s\big(\|u_y\|^2(t)+\|u_z\|^2(t)\big)+\int_{\mathcal{S}}\big[u^2_{xy}(0,y,z,t)+u^2_{xz}(0,y,z,t)\big]\,dydz\notag\\
 &+\|u_{yy}\|^2(t)+\|u_{zz}\|^2(t)+2\|u_{yz}\|^2(t)+3\|u_{xy}\|^2(t)+3\|u_{xz}\|^2(t)\notag\\&
 +((1+x)u_x-u,u^2_y)(t)+((1+x)u_x-u,u^2_z)(t)\notag\\&
 =2((1+x)u_t,u_{yy}+u_{zz})(t). \label{e5}
\end{align}

We estimate
\begin{align*}
&I_1=((1+x)u_x-u,u^2_y)(t)\leq \|(+x)u_x-u\|(t)\|u_y\|^2_{L^4(\mathcal{D})}(t)\\&
\leq (1+L)\big[\|u_x\|(t)+\|u\|(t)\big]4^{3/2}C^2_D\|\nabla u\|^{1/2}(t)\|\nabla 
u_y\|^{3/2}(t)\\&
\leq \frac18\|\nabla u_y|^2(t)+(1+L)^4C^8_D2^{16¨}3^3\big[\|u_x\|^4(t)+\|u\|^4(t)\big]\|\nabla u\|^2(t).
\end{align*}            
Similarly,
\begin{align*}
&I_2=((1+x)u_x-u,u^2_z)(t)\leq  \frac18\|\nabla u_z|^2(t)\\&+(1+L)^4C^8_D2^{16¨}3^3\big[\|u_x\|^4(t)+\|u\|^4(t)\big]\|\nabla u\|^2(t).
\end{align*}

Substituting $I_1,I_2$ into \eqref{e5},we find
\begin{align*}
&\int_{\mathcal{S}}\big[u^2_{xy}(0,y,z,t)+u^2_{xz}(0,y,z,t)\big]\,dydz
+\|u_{yy}\|^2(t)+\|u_{zz}\|^2(t)\\&+\|u_{yz}\|^2(t)+\|u_{xy}\|^2(t)+\|u_{xz}\|^2(t)\\&\leq C_4(L)\big[\|\nabla u\|^6(t)+\|u\|^4(t)\|\nabla u\|^2(t)+((1+x),u_t^2)(t)].
\end{align*}

Making use of \eqref{E2}, \eqref{Enabla}, \eqref{EUt}, \eqref{EH1},  we get
\begin{align}
&\int_{\mathcal{S}}\big[u^2_{xy}(0,y,z,t)+u^2_{xz}(0,y,z,t)\big]\,dydz + \|u_y\|^2_{H^1(\mathcal{D})}(t)\notag\\&+\|u_z\|^2_{H^1(\mathcal{D})}(t)\leq 
C_5(L,J_0)e^{- \chi t}J_0.\label{EH2´} 
\end{align}

To prove that
$$\|u\|^2_{H^2(\mathcal{D})}(t)\leq C_6(L,J_0)e^{- \chi t}J_0,$$
it is necessary to estimate $\|u_{xx}\|(t).$

\subsection{Estimate VI}\label{6-d estimate}

¨Strong solutions to \eqref{2.1}-\eqref{2.4} satisfy the following elliptic problem:
\begin{align}
& \Delta u_x=-u_t-c_su_x -\frac12(u^2)_x;\label{deltau}\\
& u_x(0,y,z,t)=\phi(y,z,t);\label{phi}\\
&u_x(L,y,z,t)=u_x(x,0,z,t)=u_x(x,B_y,z,t)\\&=u_x(x,y,0,t)=u_x(x,y,B_z,t)=0.
\end{align}

Denote $v=u_x-\phi(y,z)\big(1-\frac xL\big)$ to come to the following Dirichlet problem:

\begin{align}
& \Delta v=-u_t-c_su_x -\frac12(u^2)_x-\big(1-\frac xL\big)(\phi_y(y,z,t))_y\notag\\&-\big(1-\frac xL\big)(\phi_z(y,z,t))_z\equiv F(x,y,z,t);\label{deltav}\\
& v|_{\gamma}=0.
\end{align}

Considering the scalar product
$$-(\Delta v,v)(t)=-(F,v)(t),$$

we find

\begin{align}
&\|v_x\|^2(t)+\|v_y\|^2(t)+\|v_z\|^2(t)\leq C_7\big\{ \|u_t\|^2(t)+\|u\|^2(t)
+\|u\|^4_{L^4{(\mathcal{D})}}(t)\notag\\&+\int_{\mathcal{S}}[(u^2_{xy}(0,y,z,t)
+u^2_{xz}(0,y,z,t)]\,dydz\big\}.\label{nablav}
\end{align}

We estimate
$$\|u\|^4_{L^4{(\mathcal{D})}}(t)\leq 4^3\|u\|(t)\|\nabla u\|^3(t).$$

Substituting this into \eqref{nablav} and making use of \eqref{EUt},\eqref{EH1}, we get
$$\|v_x\|^2(t)\leq C_8e^{-\chi t}J_0.$$

By definition,
$$ u_{xx}=v_x-\frac 1L\phi(y,z,t).$$
Hence, due to \eqref{EH2´},
$$\|u_{xx}\|^2(t)\leq C_9e^{-\chi t}J_0$$
 which jointly with \eqref{EH2´} reads
 \begin{align}
 \|u\|^2_{H^2(\mathcal{D}}(t)\leq C_{10}(L,J_0)e^{-\chi t} .\label{EH2}
 \end{align}

\subsection{Estimate VII}\label{7-d estimate}
Consider the scalar product
$$2((1+x)Au,\partial^4_y u+\partial^4_z u+\partial^2_y\partial^2_z u )(t)=0$$
and transform it into the equality
\begin{align}
&\frac{d}{dt}((1+x),u^2_{yy}+u^2_{zz}+u^2_{yz})(t)\notag\\&
-c_s\big[\|u_{yy}\|^2(t)+\|u_{zz}\|^2(t)+]\|u_{yz}\|^2(t)\big]\notag\\&
+\int_{\mathcal{S}}\big\{u^2_{xyy}(0,y,z,t)+u^2_{xzz}(0,y,z,t)+u^2_{xyz}(0,y,z,t)
\big\}\,dydz\notag\\&
+\|u_{yyy}\|^2(t)+\|u_{zzz}\|^2(t)+2\|u_{yzz}\|^2(t)+2\|u_{zyy}\|^2(t)\notag\\&
+3\|u_{xyy}\|^2(t)+3\|u_{xzz}\|^2(t)+3\|u_{xyz}\|^2(t)\notag\\&
+2((1+x)uu_x,\partial^4_y u)(t)+2((1+x)uu_x,\partial^4_z u)(t)\notag\\&+2((1+x)uu_x,\partial^2_y\partial^2_z u)(t)=0.\label{e7}
\end{align}
We estimate
\begin{align*}
&I_1=2((1+x)uu_x,\partial^4_z u)(t)=-2((1+x)u_z u_x,\partial^3_z u)(t)\notag\\&
-2((1+x)uu_{zx},\partial^3_z u)(t)=((1+x)u_x-u,u^2_{zz})(t)\notag\\&
-2(u^2_z,u_{zz})(t)-2((1+x)u^2_z,u_{xzz})(t)\equiv I_{11}+I_{12}+I_{13},\\
\end{align*}
where
\begin{align*}
&I_{11}=((1+x)u_x-u,u^2_{zz})(t)\leq \|(1+x)u_x-u\|(t)\|u_{zz}\|^2_{L^4(\mathcal{D})}(t)\\&\leq (1+L)[\|u_x\|(t)+\|u\|(t)]4^{3/2}\|u_{zz}\|^{1/2}(t)\|\nabla u_{zz}\|^{3/2}(t)\\&
\leq\frac34\epsilon^{4/3}\|\nabla u_{zz}\|^2(t)+\frac{(1+L)^4}{\epsilon^4}2^{13}\|u_{zz}\|^2(t)\big[\|u_x\|^4+\|u\|^4\big];
\end{align*}

\begin{align*}
&I_{12}=2(u^2_z,u_{zz})(t)\leq 2^4\|u_{zz}\|(t)\|u_z\|^{1/2}(t)\|\nabla  u_z\|^{3/2}(t)\leq C\|u\|^3_{H^2(\mathcal{D})}(t)
\end{align*}
and
\begin{align*}
&I_{13}=2((1+x)u^2_z,u_{xzz})(t)\leq 4^{3}(1+L)\|u_{xzz}\|(t)\|u_z\|^{1/2}(t)\|\nabla  u_z\|^{3/2}(t)\\&\leq \delta_1 \|u_{xzz}\|^2(t)+\frac{4^3(1+L)^2}{\delta_1}\|u_z\|(t)\|\nabla u_z\|^3(t).
\end{align*}

Similarly,

\begin{align*}
&I_2=2((1+x)uu_x,\partial^4_y u)(t)=-2((1+x)u_y u_x,\partial^3_y u)(t)\notag\\&
-2((1+x)uu_{yx},\partial^3_y u)(t)=((1+x)u_x-u,u^2_{yy})(t)\notag\\&
-2(u^2_y,u_{yy})(t)-2((1+x)u^2_y,u_{xyy })(t)\equiv I_{21}+I_{22}+I_{23},
\end{align*}

where

\begin{align*}
&I_{21}=((1+x)u_x-u,u^2_{yy})(t)\leq \|(1+x)u_x-u\|(t)\|u_{yy}\|^2_{L^4(\mathcal{D})}(t)\\&
\leq\frac34\epsilon^{4/3}\|\nabla u_{yy}\|^2(t)+\frac{(1+L)^4}{\epsilon^4}2^{13}\|u_{yy}\|^2(t)\big[\|u_x\|^4(t)+\|u\|^4(t)\big];
\end{align*}

\begin{align*}
&I_{22}=2(u^2_y,u_{yy})(t)\leq 2^4\|u_{yy}\|(t)\|u_y\|^{1/2}(t)\|\nabla  u_y\|^{3/2}(t)\leq 2^4\|u\|^3_{H^2(\mathcal{D})}(t)
\end{align*}
and
\begin{align*}
&I_{23}=2((1+x)u^2_y,u_{xyy})(t)\leq \delta_1 \|u_{xyy}\|^2(t)+\frac{4^3(1+L)^2}{\delta_1}\|u_y\|(t)\|\nabla u_y\|^3(t).
\end{align*}
\begin{align*}
I_3=&2((1+x)uu_x,\partial^2_y\partial^2_z u)(t)=-2((1+x)u_z u_x,\partial^2_y\partial_z u)(t)\\&
-2((1+x)uu_{zx},\partial^2_y\partial_z u)(t)=2((1+x)u_x,u^2_{zy})(t)\\&
-2((1+x)u^2_z,u_{xyy})(t)-(u^2_z,u_{yy})(t)+2((1+x)uu_{xzz},u_{yy})(t)\\&\equiv I_{31}+I_{32}+I_{33}+I_{34}.
\end{align*}

From here

\begin{align*}
I_{31}=&2((1+x)u_x,u^2_{zy})(t)\leq 2(1+L)\|u_x\|(t)\|u_{zy}\|^2_{L^4(\mathcal{D})}(t)\\&
\leq 2^4(1+L)\|u_x\|(t)\|u_{zy}\|^{1/2}(t)\|\nabla u_{zy}\|^{3/2}(t)\\&
\leq \frac34 \epsilon^{4/3}\|\nabla u_{zy}\|^2(t)+\frac{2^{14}}{\epsilon^4}(1+L)^4\|u_x\|^4(t)\|u_{zy}\|^2(t);
\end{align*}

\begin{align*}
&I_{32}=-2((1+x)u^2_z,u_{xyy})(t)\leq (1+L)\|u_{xyy}\|(t)\|u_z\|^2_{L^4(\mathcal{D})}(t)\\&
\leq \delta\|u_{xyy}\|^2(t)+\frac{4^3 (1+L)^2}{\delta}\|\nabla u\|(t)\|\nabla u_z\|^3(t);
\end{align*}

\begin{align*}
&I_{33}=(u^2_z,u_{yy})(t)\leq \|u_{yy}\|(t)\|u_z\|^2_{L^4(\mathcal{D})}(t)\\&
\leq \|u_{yy}\|^2(t)+4^2\|u_z\|(t)\|\nabla u_z\|^3(t);
\end{align*}

\begin{align*}
&I_{34}=2((1+x)uu_{xzz},u_{yy})(t)\leq 2(1+L)\|u_{xzz}\|(t)\|u\|_{L^4(\mathcal{D})}(t)\|u_{yy}\|_{L^4(\mathcal{D})}(t)\\&
\leq \delta_1\|u_{xzz}\|^2(t)+\frac{4^3(1+L)^2}{\delta_1}\|\nabla u_{yy}\|^{3/2}(t)\|u\|^{1/2}(t)\|\nabla u\|^{3/2}(t)\|u_{yy}\|^{1/2}(t)\\&
\leq \delta_1\|u_{xzz}\|^2(t)+\frac{3\delta^{4/3}}{4\delta_1}\|\nabla u_{yy}\|^2(t)
+\frac{4^{11}(1+L)^8}{\delta_1 \delta^4}\|u\|^2(t)\|u_{yy}\|^2(t)\|\nabla  u\|^6(t),
\end{align*}
 where $\delta, \delta_1, \epsilon$ are arbitrary positive numbers.

Taking them sufficiently small, we reduce \eqref{e7} to the form

\begin{align*}
&\frac{d}{dt}((1+x),u^2_{yy}+u^2_{zz}+u^2_{yz})(t)\\&
+\int_{\mathcal{S}}\big\{u^2_{xyy}(0,y,z,t)+u^2_{xzz}(0,y,z,t)+u^2_{xyz}(0,y,z,t)
\big\}\,dydz\\&
+\|u_{yyy}\|^2(t)+\|u_{zzz}\|^2(t)+\|u_{yzz}\|^2(t)+\|u_{zyy}\|^2(t)\\&
+\|u_{xyy}\|^2(t)+\|u_{xzz}\|^2(t)+\|u_{xyz}\|^2(t)\\&
\leq C_{11}(L)\|u\|^3_{H^2(\mathcal{D})}(t)\big[1+\|u\|^3_{H^2(\mathcal{D})}(t)].
\end{align*}

Integrating this, we obtain

\begin{align}
&((1+x),u^2_{yy}+u^2_{zz}+u^2_{yz})(t)\notag\\&
+\int_0^t\big\{\int_{\mathcal{S}}\big\{u^2_{xyy}(0,y,z,\tau)+u^2_{xzz}(0,y,z,\tau)+u^2_{xyz}(0,y,z,\tau)
\big\}\,dydz\notag\\&
+\|u_{yyy}\|^2(\tau)+\|u_{zzz}\|^2(\tau)+\|u_{yzz}\|^2(\tau)+\|u_{zyy}\|^2(\tau)\notag\\&
+\|u_{xyy}\|^2(\tau)+\|u_{xzz}\|^2(\tau)+\|u_{xyz}\|^2(\tau)\big\}\,d\tau\notag\\&
\leq C_{12}(L,B_y,B_z,J_0)((1+x),u^2_{0yy}+u^2_{0zz}+u^2_{0zy}),\label{E7}\end{align}

with the constant $C_{12}(L,B_y,B_z,J_0)$ independent of $t>0.$

\begin{lemma}\label{last}
 Strong solutions to \eqref{2.1}-\eqref{2.4} satisfy the following inclusions:
\begin{align}
&u\in L^{\infty}(0,\infty;H^2(\mathcal{D}))\cap L^2(0,\infty;H^3(\mathcal{D}));\label{Estrong}\\&
\Delta u_x \in  L^{\infty}(0,\infty;L^2(\mathcal{D}))\cap
L^2(0,\infty;H^1(\mathcal{D}));\label{EDelta}.
\end{align}
\end{lemma}
\begin{proof} First, we will prove \eqref{Estrong}. For this purpose, rewrite \eqref{2.1} in the form
\begin{align}
&\Delta v=u_t-c_su_x-uu_x-(1-\frac xL)\Phi_{yy}(y,z,t)\notag\\&-(1-\frac xL)\Phi_{zz}(y,z,t)\equiv F(x,y,z,t);\label{Deltav}\\&
v|_{\gamma}=0,\label{Dirichlet}
\end{align}

where 
$$ \Phi(y,z,t)\equiv u_x(0,y,z,t),\quad v=u_x-\Phi(y,z,t)(1-\frac xL).$$
Due to \eqref{E7}, $$F\in L^2(0,\infty;L^2(\mathcal{D})).$$
This implies that the Dirichlet problem \eqref{Deltav},\,\eqref{Dirichlet} has a unique solution
$$v\in L^2(0,\infty;H^2(\mathcal{D})),$$
hence,
$$ u_x \in L^2(0,\infty;H^2(\mathcal{D})).$$
Taking into acount \eqref{E7}, we prove \eqref{Estrong}. On the other hand, it follows from the equation
$$ \Delta u_x=-u_t-c_su_x-uu_x \equiv G(x,y,t)$$ that $G\in L^2(0,\infty;H^1(\mathcal{D})).$\par Indeed, making use of Proposition \ref{lady1} and \eqref{Estrong}, we find
\begin{align*}
&\|uu_x\|_{H^1(\mathcal{D})}(t)\leq \|uu_x\|(t)+\|\nabla (uu_x)\|(t)\\&
\leq \|u\|_{L^4(\mathcal{D})}(t)\|u_x\|_{L^4(\mathcal{D})}(t)+\|u\|_{L^4(\mathcal{D})}(t)
\|\nabla u_x\|_{L^4(\mathcal{D})}(t)+\|\nabla u\|^2_{L^4(\mathcal{D})}(t)\\&\leq 2^3\big[\|u\|^{1/4}(t)\|\nabla u\|(t)\|\nabla u_x\|^{3/4}(t)\\&
+\|u\|^{1/4}(t)\|\nabla u\|^{3/4}(t)\|\nabla u_x\|^{1/4}(t)\|\nabla^2 u_x\|^{3/4}(t)+\|\nabla u\|^{1/2}(t)\|\nabla^2 u\|^{3/2}(t)\big]\\&\leq 2^3 \|u\|^2_{H^3(\mathcal{D})}(t)\leq \infty.
\end{align*}
By \eqref{Estrong}, $uu_x \in L^2(0,\infty;H^1(\mathcal{D})).$
This implies that $G\in L^2(0,\infty;H^1(\mathcal{D})).$
Thus, the proof of Lemma \ref{last} is complete.
\end{proof}

The proof of the existence pat of Theorem \ref{theorem1} is also complete.

\begin{lemma} \label{uniq}
The strong solution of Theorem \ref{theorem1} is uniquelly defined.
\end{lemma}
\begin{proof} Let $u_1,\; u_2$  be distinct solutions to \eqref{2.1}-\eqref{2.4}, then $w=u_1-u_2$ satisfies the following initial-boundary value problem:
\begin{align}
&  Lw\equiv w_t+c_sw_x+\Delta w_x+ww_x+(u_2 w)_x=0;\;\mbox{in}\;\mathcal{Q}_t; \label{Lv}\\
&w|_{\gamma}=w_x(L,y,z,t)=0,\quad t>0,\: (y,z)\in \mathcal{S};\\
&w(x,y,x,0)\equiv 0 \quad (x,y,z)\in \mathcal{D}. \label{indata}
\end{align}

Consider the scalar product
$$2((1+x)Lw,w)(t)=0$$ which can be transformed into the following equality:
\begin{align}
&\frac{d}{dt}((1+x),w^2)(t)+\int_{\mathcal{S}}w^2_x(0,y,z,t)\,dydz
+2\|w_x\|^2(t)+\|\nabla w\|^2(t)\notag\\&-\frac23(1,w^3)(t)
+((1+x)u_{2x}-u_2,w^2)(t)=0. \label{euniq}
\end{align}

Making use of Proposition \ref{lady1} and acting in the same manner as above, we find
\begin{align*}
&I_1=-\frac23(1,w^3)(t)\leq \frac34 \epsilon^{4/3}\|\nabla w\|^2(t)+\frac{2^{14}}{3^4\epsilon^4}\|w\|^6(t)
\end{align*}

and
\begin{align*}
&I_2=2((1+x)u_{2x}-u_2,w^2)(t)\leq \frac34 \epsilon^{4/3}\|\nabla w\|^2(t)\\&
+\frac{C(L)}{\epsilon^4}\big[ \|u_{2x}\|^4(t)+\|u_2\|^4(t)\big]\|w\|^2(t).
\end{align*}

Taking $\epsilon>0$ sufficiently small, substituting $I_1, I_2$ into \eqref{euniq} and making use of \eqref{Estrong}, we obtain
\begin{align*}
&\frac{d}{dt}((1+x),w^2)(t)\leq C_{13}(L)\big[\|u_{2x}\|^4(t)\\&
+\|u_1\|^4(t)+\|u_2\|^4(t)\big]((1+x),w^2)(t).
\end{align*}
By \eqref{EH2´},
$$\|u_{2x}\|^4(t)+\|u_1\|^4(t)+\|u_2\|^4(t) \in L^1(0,\infty)$$ and
by the Gronwall Lemma,
$$\|w\|^2(t)\leq ((1+x),w^2)(t)\equiv 0.$$
The proof of Lemma \ref{uniq} is complete.
\end{proof}
Obviously, this completes also the proof of Theorem \ref{theorem1}. 
\end{proof}

\begin{rem}
If $w(x,y,z,0)=w _0(x,y,z)\ne 0,$ then
$$\|w\|^2(t)\leq ((1+x),w^2)(t)\leq C(L,J_0)((1+x),w^2_0)\;\forall t>0.$$
This means continuous dependence of solutions to \eqref{2.1}-\eqref{2.4} on initial data.
\end{rem}

\begin{rem}

To prove Theorem 1, we have the following alternative: either the coefficient $K_2$ to be sufficiently large which can be done for small values of $L,B_y,B_z$ (at least one of them) or initial data $ \|u_0\|,J_0$ to be sufficiently small while $c_s$ is fixed. In our case, $L$ may be arbitary large finite, hence we can handle values of $B_y,B_z.$
\end{rem}

\
\medskip

  \end{document}